\documentclass[final,oneeqnum]{siamltex}
\usepackage{amsfonts, amssymb,amsmath}
\usepackage[active]{srcltx}
\sloppy

\title{Block triangular miniversal
deformations of
matrices and matrix pencils\footnote{This paper was published in: V. Olshevsky, E. Tyrtyshnikov (Eds), Matrix Methods: Theory, Algorithms and Applications, World Scientific Publishing Co. Pte. Ltd., Hackensack, NJ, 2010, pp. 69-84.}}

\author{
Lena Klimenko\thanks{Information and Computer Centre of the Ministry of Labour and Social Policy of Ukraine, Esplanadnaya 8/10, Kiev, Ukraine (e.n.klimenko@gmail.com).}
   \and
Vladimir V. Sergeichuk\thanks{Institute of Mathematics,
Tereshchenkivska 3, Kiev,
Ukraine (sergeich@imath.kiev.ua).
}}

\newtheorem{remark}[theorem]{Remark}
\newcommand{\ddd}{
\text{\begin{picture}(12,8)
\put(-2,-4){$\cdot$}
\put(3,0){$\cdot$}
\put(8,4){$\cdot$}
\end{picture}}}

\renewcommand{\le}{\leqslant}
\renewcommand{\ge}{\geqslant}

\begin{document}
\maketitle

\begin{abstract}
For each square complex matrix, V. I.
Arnold
constructed a
normal form with the minimal number of parameters to
which a family of all
matrices $B$ that are close enough to
this matrix can be reduced by
similarity
transformations that
smoothly depend on the
entries of $B$.
Analogous normal forms were also
constructed for families of
complex matrix pencils by A. Edelman, E.
Elmroth, and B.
K\r{a}gstr\"{o}m, and contragredient matrix pencils (i.e., of matrix pairs up to transformations $(A,B)\mapsto(S^{-1}AR,R^{-1}BS)$)  by M. I. Garcia-Planas and
V. V. Sergeichuk. In this paper we give other normal forms for families of matrices, matrix pencils, and contragredient matrix pencils; our normal forms are block triangular.
\end{abstract}

\begin{keywords}
canonical forms, matrix pencils, versal deformations, perturbation theory
\end{keywords}

\begin{AMS}
15A21, 15A22
\end{AMS}

\section{Introduction}
\label{introd}

The reduction of a
matrix to its Jordan
form is an unstable
operation: both the
Jordan form and the
reduction
transformations depend
discontinuously on the
entries of the
original matrix.
Therefore, if the
entries of a matrix
are known only
approximately, then it
is unwise to reduce it
to Jordan form.
Furthermore, when
investigating a family
of matrices smoothly
depending on
parameters, then
although each
individual matrix can
be reduced to its Jordan
form, it is unwise to
do so since in such an
operation the
smoothness relative to
the parameters is
lost.

For these reasons,
Arnold \cite{arn}
constructed a \emph{miniversal
deformation} of any Jordan canonical matrix $J$; that is, a
family of matrices in a neighborhood of $J$
with the minimal number of parameters, to
which all
matrices $M$ close to
$J$ can be reduced by
similarity
transformations that
smoothly depend on the
entries of $M$ (see Definition \ref{ite}).

Miniversal
deformations were also
constructed for:
\begin{itemize}
  \item[(i)]
the Kronecker canonical form of
complex matrix pencils
by Edelman, Elmroth,
and K\r{a}gstr\"{o}m
\cite{kag}; another\
miniversal
deformation  (which is \emph{simple} in the sense of Definition \ref{fruo}) was
constructed by Garcia-Planas and
Sergeichuk
\cite{gar_ser};

  \item[(ii)]
the Dobrovol'skaya and Ponomarev  canonical form of complex
contragredient matrix
pencils (i.e., of
matrices of
counter linear
operators
$U\rightleftarrows V$)
in \cite{gar_ser}.
\end{itemize}

Belitskii \cite{bel_rus} proved that each Jordan canonical matrix $J$ is permutationally similar to some matrix $J^{\#}$, which is called a \emph{Weyr canonical matrix} and possesses the property: all matrices that commute with $J^{\#}$ are block triangular. Due to this property, $J^{\#}$ plays a central role in Belitskii's algorithm for reducing the matrices of any system of linear mappings to canonical form, see \cite{bel_engl,ser_can}.

In this paper, we find another property of Weyr canonical matrices: they possess block triangular miniversal deformations (in the sense of Definition \ref{fruo}).
Therefore, if we consider, up to smooth similarity transformations, a family of matrices that are close enough to a given square matrix, then we can take it in its Weyr canonical form $J^{\#}$ and the family in the form $J^{\#}+E$, in which $E$ is block triangular.

We also give block triangular miniversal deformations
of those canonical forms of pencils and contragredient pencils that are obtained from (i) and (ii) by replacing the Jordan canonical matrices with the Weyr canonical matrices.

All matrices that we consider are complex matrices.

\section{Miniversal deformations of matrices}\hfil
\label{s2}

\begin{definition}[see
\cite{arn,arn2,arn3}]\label{ite} A \emph{deformation}
of an $n$-by-$n$ matrix $A$ is a matrix function
${\cal A}(\alpha_1, \dots,\alpha _k)$
$($its arguments $\alpha_1, \dots,\alpha _k$ are called \emph{parameters}$)$ on a neighborhood of $\vec 0=(0,\dots,0)$ that is holomorphic at $\vec 0$ and equals $A$  at $\vec 0$.
Two deformations of $A$ are identified if they coincide on a neighborhood of $\vec 0$.

A deformation ${\cal A}(\alpha_1, \dots,\alpha _k)$ of $A$ is \emph{versal} if all matrices $A+E$ in some neighborhood of $A$ reduce to the form
\[
{\cal A}(h_1(E),\dots,h_k(E))={\cal S}(E)^{-1}(A+E){\cal S}(E),\qquad S(0)=I_n,
\]
in which ${\cal S}(E)$ is a holomorphic at zero matrix function of the entries of $E$.

A versal deformation with the minimal number of parameters is called \emph{miniversal}.
\end{definition}

\begin{definition}\label{fruo}
Let a deformation ${\cal
A}$ of $A$ be represented in the form $A+{\cal
B}(\alpha_1, \dots,\alpha _k)$.
\begin{itemize}
  \item
If $k$ entries of ${\cal
B}(\alpha_1, \dots,\alpha _k)$ are the independent parameters $\alpha _1,\dots,\alpha _k$ and the others are zero then the deformation ${\cal
A}$ is called \emph{simple}\footnote{Arnold's miniversal definitions presented in Theorem \ref{teo2} are simple. Moreover, by \cite[Corollary 2.1]{gar_ser} the set of matrices of any quiver representation (i.e., of any finite system of linear mappings) over $\mathbb C$ or $\mathbb R$ possesses a simple miniversal deformation.}.

  \item
A simple deformation is \emph{block triangular} with respect to some partition of $A$ into blocks if ${\cal
B}(\alpha_1, \dots,\alpha _k)$ is block triangular with respect to the conformal partition and each of its blocks is either $0$ or all of its entries are independent parameters.

\end{itemize}
\end{definition}

If ${\cal
A}(\alpha_1, \dots,\alpha _k)$ is a miniversal deformation of $A$ and $S^{-1}AS=B$ for some nonsingular $S$, then $S^{-1}{\cal
A}(\alpha_1, \dots,\alpha _k)S$ is a miniversal deformation of $B$. Therefore, it suffices to construct miniversal deformations of canonical matrices for similarity.

Let
\begin{equation}\label{kid}
J(\lambda ):=J_{n_1}(\lambda)\oplus\dots \oplus J_{n_l}(\lambda),\qquad n_1\ge n_2\ge\dots\ge n_l,
\end{equation}
be a Jordan canonical matrix with a single eigenvalue equal to $\lambda$; the unites of Jordan blocks are written over the diagonal:
\[
J_{n_i}(\lambda):=\begin{bmatrix}
\lambda&1&&0\\&\lambda&\ddots&\\&&\ddots&1
\\ 0&&&\lambda
\end{bmatrix}\qquad \text{($n_i$-by-$n_i$)}.
\]
For each natural numbers $p$ and $q$, define the $p\times q$ matrix
\begin{equation}       \label{5.1}
{\cal T}_{pq}:=
  \begin{cases}
    \begin{bmatrix}
         * & 0&\dots&0 \\ \vdots&\vdots&&\vdots \\
         * & 0&\dots&0 \\
       \end{bmatrix}
& \text{if $p< q$}, \\[7mm]
    \begin{bmatrix}
         0&\dots&0 \\ \vdots&&\vdots \\
         0&\dots&0 \\
         * &\dots&* \\
       \end{bmatrix} & \text{if $p\ge q$},
  \end{cases}
\end{equation}
in which the stars denote
independent parameters (alternatively, we may take ${\cal T}_{pq}$ with $p=q$ as in the case $p<q$).

\begin{theorem}[{\cite[\S 30, Theorem 2]{arn3}}]
\label{teo2}
{\rm(i)}
Let $J(\lambda )$ be a Jordan
canonical matrix of the form  \eqref{kid} with a single eigenvalue equal to $\lambda$. Let ${\cal H}:=[{\cal T}_{n_i,n_j}]$ be the parameter block matrix partitioned conformally to $J(\lambda )$ with the blocks ${\cal T}_{n_i,n_j}$ defined in \eqref{5.1}.  Then
\begin{equation}\label{lir}
 J(\lambda )+{\cal H}
\end{equation}
is a simple miniversal deformation of $J(\lambda )$.

{\rm(ii)}  Let
\begin{equation}\label{juyt}
J:=J(\lambda_1)\oplus\dots\oplus J(\lambda_ {\tau}),\qquad \text{$\lambda_i\ne \lambda_j$ if $i\ne j$},
\end{equation}
be a Jordan canonical matrix in which every $J(\lambda_i)$ is of the form \eqref{kid}, and let $J(\lambda_i)+{\cal H}_{i}$ be its miniversal deformation \eqref{lir}. Then
\begin{equation}\label{kuc}
J+{\cal K}:=(J(\lambda_1)+{\cal H}_{1})\oplus\dots\oplus (J(\lambda_{\tau})+{\cal H}_{{\tau}})
\end{equation}
is a simple miniversal deformation of $J$.
\end{theorem}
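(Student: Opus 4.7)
The plan is to reduce the miniversality statement to a linear-algebra transversality statement and then verify the transversality explicitly, using the block structure of $\mathcal{H}$.

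First I would reduce the problem. A simple deformation $A+\mathcal{B}(\alpha_1,\dots,\alpha_k)$, whose parameter matrices span a subspace $\mathcal{V}\subseteq M_n$, is miniversal if and only if $\mathcal{V}$ is a direct complement in $M_n$ of the tangent space $T_A:=\{AX-XA:X\in M_n\}$ to the similarity orbit at $A$. This follows by applying the holomorphic implicit function theorem to $f(S,V):=S^{-1}(A+V)S$ at $(I,0)$, whose differential is $(X,V)\mapsto [A,X]+V$ with image $T_A+\mathcal{V}$: surjectivity onto a neighbourhood of $A$ gives versality and the existence of a holomorphic section $E\mapsto(\mathcal{S}(E),\mathcal{B}(h_1(E),\dots,h_k(E)))$, while minimality of $k$ corresponds to the sum being direct.

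For part (i), two things must be checked for the space $\mathcal{V}$ spanned by the stars of $\mathcal{H}$. The dimension count is immediate: the block $\mathcal{T}_{n_i,n_j}$ contains $\min(n_i,n_j)$ independent parameters, so $\dim\mathcal{V}=\sum_{i,j}\min(n_i,n_j)$, which is the classical dimension of the centraliser $C(J(\lambda))$; since $\dim T_{J(\lambda)}=n^2-\dim C(J(\lambda))$, the two dimensions add to $n^2$. The harder step is the triviality of $\mathcal{V}\cap T_{J(\lambda)}$. Since $\lambda I$ is central, $T_{J(\lambda)}=T_N$ where $N=J(\lambda)-\lambda I$ is the nilpotent part. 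I would then work block by block: on the $(i,j)$-entry $X_{ij}$ of an arbitrary $X$, the operator $\operatorname{ad}_N$ acts as $X_{ij}\mapsto N_{n_i}X_{ij}-X_{ij}N_{n_j}$, which shifts $X_{ij}$ up by one row and subtracts the right-shift by one column. Reading off this pattern one checks that the image on each block never hits the distinguished star positions of $\mathcal{T}_{n_i,n_j}$ simultaneously for all $i,j$: in the case $n_i\ge n_j$ the last row of $N_{n_i}X_{ij}-X_{ij}N_{n_j}$ is determined by the rows above (its entries are uniquely recoverable from entries strictly above), so by induction on rows any $H\in\mathcal{V}$ lying in $T_N$ must be zero; the case $n_i<n_j$ is dual. (Equivalently, one may use the trace pairing to identify $T_{J(\lambda)}^{\perp}=C(J(\lambda)^{\mathsf T})$ and verify that the Gram matrix between $\mathcal{V}$ and the upper-triangular Toeplitz block description of $C(J(\lambda)^{\mathsf T})$ is non-singular.) Combined with the dimension count this gives $M_n=T_{J(\lambda)}\oplus\mathcal{V}$, and then the reduction in the first paragraph yields miniversality.

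For part (ii), the decoupling across distinct eigenvalues is the key point. If $i\neq j$ and $X_{ij}$ is an off-diagonal block between the $\lambda_i$- and $\lambda_j$-parts of $J$, then the Sylvester operator $X_{ij}\mapsto J(\lambda_i)X_{ij}-X_{ij}J(\lambda_j)$ has spectrum contained in $\{\lambda_i-\lambda_j\}$ plus nilpotent contributions and is therefore invertible since $\lambda_i\ne\lambda_j$. Consequently every such off-diagonal block belongs to $T_J$, so $T_J$ splits as the direct sum of the $T_{J(\lambda_i)}$ on the diagonal blocks plus all off-diagonal blocks. Applying part (i) to each $J(\lambda_i)$ and taking the direct sum produces a complement $\mathcal{K}$ to $T_J$ of the block-diagonal shape in \eqref{kuc}, and the reduction gives that $J+\mathcal{K}$ is a simple miniversal deformation of $J$.

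The main obstacle is the explicit trivial-intersection verification in part (i); once that is carried out, both the dimension bookkeeping and the reduction from (ii) to (i) are routine.
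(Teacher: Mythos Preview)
The paper does not supply its own proof of this theorem; it is quoted as a known result from Arnold's book, so there is nothing in the paper to compare against beyond the citation. Your outline is precisely Arnold's classical argument---reduce miniversality of a simple deformation to the transversality condition $M_n=T_A\oplus\mathcal V$ via the holomorphic implicit function theorem, then verify transversality by a dimension count plus $\mathcal V\cap T_A=\{0\}$ (equivalently, via the trace-form identification $T_A^{\perp}=C(A^{\mathsf T})$)---and it is correct; in particular your block computation does force $H_{ij}=0$, since setting the first $n_i-1$ rows of $N_{n_i}X_{ij}-X_{ij}N_{n_j}$ to zero yields $x_{k+1}=x_1N_{n_j}^{k}$ and hence the last row equals $-x_1N_{n_j}^{n_i}=0$ when $n_i\ge n_j$, with the case $n_i<n_j$ dual.
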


\begin{definition}[\cite{wey}]\label{jyf}
The \emph{Weyr canonical form} $J^{\#}$ of a Jordan canonical matrix $J$ $($and of any matrix that is similar to $J)$ is defined as follows.

{\rm(i)} If $J$ has a single eigenvalue, then we write it in the form \eqref{kid}.
Permute the first columns of $J_{n_{1}}(\lambda )$, $J_{n_{2}}(\lambda )$,
\dots, and $J_{n_{l}}(\lambda )$ into the first $l$ columns, then permute the
corresponding rows.  Next permute the second columns of all blocks of size at least $2\times 2$ into the next columns and permute the
corresponding rows; and so on. The obtained matrix is the Weyr canonical form $J(\lambda )^{\#}$ of $J(\lambda )$.

{\rm(ii)} If $J$ has distinct eigenvalues, then we write it in the form \eqref{juyt}. The Weyr canonical form of $J$ is
\begin{equation}\label{dul}
J^{\#}:=J(\lambda_1)^{\#} \oplus\dots\oplus J(\lambda_ {\tau})^{\#}.
\end{equation}
\end{definition}

Each direct
summand of \eqref{dul} has the form
\begin{equation}\label{utk}
J(\lambda )^{\#}=\begin{bmatrix}
\lambda I_{s_{1}}&\begin{bmatrix}
I_{s_{2}}\\0
\end{bmatrix}&&0\\
&\lambda I_{s_{2}}&\ddots&\\
&&\ddots&\begin{bmatrix}
I_{s_{k}}\\0
\end{bmatrix}\\ 0&&&\lambda I_{s_{k}}
\end{bmatrix},
\end{equation}
in which $s_i$ is the number of Jordan
blocks $J_l(\lambda )$ of size $l\ge i$ in $J(\lambda )$. The sequence
$(s_{1}, s_{2},\dots,s_{k})$ is called the \emph{Weyr
characteristic} of $J$  (and of any matrix that is similar to $J$) for the eigenvalue $\lambda$, see
\cite{sha}.
By \cite{bel_rus} or \cite[Theorem 1.2]{ser_can}, all matrices commuting with $J^{\#}$ are block triangular.

In the next lemma we construct a miniversal deformation of $J^{\#}$ that is block triangular with respect to the most coarse
partition of $J^{\#}$ for which all diagonal blocks have the
form $\lambda_i I$ and each off-diagonal block
is $0$ or $I$. This means that the sizes of diagonal blocks of
\eqref{utk} with respect to this partition form the sequence obtained from
\[
\begin{array}{l}
  s_k,\ s_{k-1}-s_k,\ \dots,\ s_2-s_3,\ s_1-s_2,\\
  s_k,\ s_{k-1}-s_k,\ \dots,\ s_2-s_3,\\ \dots\dots\dots\dots\dots\dots\\
  s_k,\ s_{k-1}-s_k,\\
  s_k
  \end{array}
\]
by removing the zero members.

\begin{theorem}      \label{t2h}
{\rm(i)}
Let $J(\lambda )$ be a Jordan
canonical matrix of the form  \eqref{kid} with a single eigenvalue equal to $\lambda$. Let $J(\lambda )+\cal H$ be its miniversal deformation \eqref{lir}. Denote by
\begin{equation}\label{kut}
J(\lambda )^{\#}+{\cal H}^{\#}
\end{equation}
the parameter matrix obtained from $J(\lambda )+{\cal H}$ by the permutations described in Definition {\rm\ref{jyf}(i)}. Then $J(\lambda )^{\#}+{\cal H}^{\#}$ is a miniversal deformation of $J(\lambda)^{\#}$ and its matrix ${\cal H}^{\#}$ is lower block triangular.

{\rm(ii)} Let $J$ be a Jordan canonical matrix represented in the form \eqref{juyt} and let $J^{\#}$ be its Weyr canonical form. Let us apply the permutations described in {\rm(i)} to each of the direct summands of miniversal deformation \eqref{kuc} of $J$. Then the obtained matrix \begin{equation}\label{kucj}
J^{\#}+{\cal K}^{\#}:=(J(\lambda_1)^{\#}+{\cal H}_1^{\#})\oplus\dots\oplus (J(\lambda_{\tau} )^{\#}+{\cal H}_{\tau}^{\#})
\end{equation}
is a miniversal deformation
of $J^{\#}$, which is simple and block triangular $($in the sense of Definition {\rm \ref{fruo})}.
\end{theorem}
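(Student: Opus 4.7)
The plan is to dispatch (i) in two steps: the miniversal property follows from the invariance of miniversality under similarity, while the block-triangular shape requires tracking the image of each Arnold parameter under the Weyr permutation; part (ii) then follows from (i) by a direct-sum argument.

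For the miniversal assertion in (i), let $P$ be the permutation matrix that realizes Definition \ref{jyf}(i), so that $P^{-1}J(\lambda )P=J(\lambda )^{\#}$. Since $J(\lambda )+{\cal H}$ is a miniversal deformation of $J(\lambda )$ by Theorem \ref{teo2}(i), the remark preceding that theorem yields that $P^{-1}(J(\lambda )+{\cal H})P=J(\lambda )^{\#}+{\cal H}^{\#}$ is a miniversal deformation of $J(\lambda )^{\#}$. The conjugation preserves simplicity because a simultaneous row/column permutation sends each star of ${\cal H}$ to a single entry of ${\cal H}^{\#}$.

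The main work is to locate each star of ${\cal H}^{\#}$. The key observation is that the Weyr permutation moves an entry at intra-block position $(r,c)$ inside the $(i,j)$ Jordan-block of $J(\lambda )$ to intra-block position $(i,j)$ inside the $(r,c)$ Weyr block. I would split on the two cases of \eqref{5.1}. If $n_i<n_j$, the stars of ${\cal T}_{n_i,n_j}$ sit at $(r,1)$ for $r=1,\dots ,n_i$ and land in Weyr block $(r,1)$: for $r\ge 2$ this is strictly below the Weyr block diagonal, and for $r=1$ the star sits in the diagonal Weyr block at position $(i,j)$ with $i>j$, since $n_1\ge\dots\ge n_l$ forces $i>j$ whenever $n_i<n_j$. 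If $n_i\ge n_j$, the stars sit at $(n_i,c)$ for $c=1,\dots ,n_j$ and land in Weyr block $(n_i,c)$; the chain $c\le n_j\le n_i$ keeps this on or below the Weyr block diagonal, and the equality $n_i=c$ forces $n_i=n_j=c$, placing the star in a diagonal Weyr block within the sub-part indexed by Jordan blocks of size exactly $n_i$. The main obstacle, which is purely bookkeeping, is to check that these landing positions lie on or below the block diagonal of the finer partition described just before the theorem, whose parts group the rows of each Weyr block by the size of their parent Jordan block (largest first); this grouping is consistent with $n_1\ge n_2\ge\dots\ge n_l$, so in each case one reads off that the star lies in the allowed region.

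Part (ii) is then immediate: conjugating \eqref{kuc} by the block-diagonal permutation $P_1\oplus\dots\oplus P_{\tau}$ produces \eqref{kucj}, which is miniversal for $J^{\#}$ by the same invariance remark, simple as a direct sum of simple pieces, and block triangular because each summand is block triangular by (i) while the off-diagonal parts between different eigenvalue summands are identically zero.
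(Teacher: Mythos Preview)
Your proof is correct and follows the same strategy as the paper's: both track where the Arnold stars land under the Weyr permutation and verify that they fall on or below the fine block diagonal. The paper organizes the bookkeeping via an intermediate matrix $J(\lambda)^{+}$ that first groups all Jordan blocks of each fixed size together, so that the stars are already assembled into the $\star$-blocks \eqref{liy} and the check collapses to the single condition \eqref{kud}; your entrywise swap $(r,c)\leftrightarrow(i,j)$ is the same combinatorics carried out without that preliminary grouping.
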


Let us prove this theorem. The form of $J(\lambda )^{\#}+{\cal H}^{\#}$ and the block triangularity of ${\cal H}^{\#}$ become clearer if we
carry out the permutations from Definition \ref{jyf}(i) in two steps.

\emph{First step.}
Let us write the sequence $n_1,n_2,\dots,n_l$ from \eqref{kid} in the form
\[
\underbrace{m_1,\dots,m_1}
_{\mbox{$r_1$
times}}, \underbrace{m_2,\dots,m_2} _{\mbox{$r_2$ times}},\dots,
\underbrace{m_t,\dots,m_t} _{\mbox{$r_t$ times}},
\]
where
\begin{equation}\label{jte}
 m_1> m_2>\dots> m_t.
\end{equation}
Partition $J(\lambda )$ into $t$ horizontal and $t$ vertical strips of sizes
\[
 r_1m_1,\ r_2m_2,\ \dots,\ r_tm_t
 \]
(each of them contains Jordan blocks of the same size), produce the described permutations within each of  these strips, and obtain
\begin{equation}\label{kids}
J(\lambda )^{\text{\it +}} :=J_{m_1}(\lambda I_{r_1})\oplus\dots \oplus J_{m_t}(\lambda I_{r_t}),
\end{equation}
in which
\[
J_{m_i}(\lambda I_{r_i}):=\begin{bmatrix}
\lambda I_{r_i} &I_{r_i}&&0\\&\lambda I_{r_i}&\ddots&\\&&\ddots&I_{r_i}
\\ 0&&&\lambda I_{r_i}
\end{bmatrix}  \qquad \text{($m_i$ diagonal blocks)}.
\]
By the same permutations of rows and columns of $J(\lambda )+{\cal H}$, reduce ${\cal H}$ to
\[{\cal H}^{\text{\it +}} :=[\tilde {\cal T}_{m_i,m_j}(r_i,r_j)],\] in which every $\tilde {\cal T}_{m_i,m_j}(r_i,r_j)$ is obtained from the matrix ${\cal T}_{m_i,m_j}$ defined in \eqref{5.1} by replacing each entry $0$ with the $r_i\times r_j$ zero block and each entry $*$ with the $r_i\times r_j$ block
\begin{equation}\label{liy}
\star:=\begin{bmatrix}
        * &\dots&* \\
        \vdots &&\vdots \\
        * &\dots&* \\
      \end{bmatrix}.
\end{equation}

For example, if
\begin{equation}\label{jut}
J(\lambda)= \underbrace{J_4(\lambda)\oplus
\dots \oplus J_4(\lambda)}_{\mbox{$p$
times}}\oplus \underbrace{J_2(\lambda)\oplus
\dots \oplus J_2(\lambda)}_{\mbox{$q$ times}}
\end{equation}
then
\begin{equation}\label{iod}
J(\lambda)^{\text{\it
+}}= J_4(\lambda I_p) \oplus J_2(\lambda I_q)=\begin{matrix}
{\arraycolsep 0.40em
\begin{matrix}
\scriptstyle(1,1)&
\scriptstyle(1,2)&
\scriptstyle(1,3)&
\scriptstyle(1,4)&
\scriptstyle(2,1)&
\scriptstyle(2,2)
   \end{matrix}}&\\
\left[\begin{array}{cccc|cc}
  \lambda I_p & I_p&0&0&0&0 \\
  0& \lambda I_p & I_p&0&0&0 \\
  0&0&\lambda I_p & I_p&0&0 \\
  0&0&0&\lambda I_p &0&0 \\\hline
0&0&0&0&\lambda I_q & I_q \\
0&0&0&0&0&\lambda I_q \\
  \end{array}\right]
  &\!\!\!\!\begin{matrix}
\scriptstyle(1,1)\\
\scriptstyle(1,2)\\
\scriptstyle(1,3)\\
\scriptstyle(1,4)\\
\scriptstyle(2,1)\\
\scriptstyle(2,2)
   \end{matrix}
    \end{matrix}
\end{equation}
A strip is indexed by $(i,j)$ if it contains the $j$-th strip of $J_{m_i}(\lambda I_{r_i})$. Correspondingly,
\begin{equation}\label{kidh}
{\cal H}^{\text{\it +}}=\begin{matrix}
  {\arraycolsep 0.20em
  \begin{array}{cccccc}
  \scriptstyle(1,1)\,&
\scriptstyle(1,2)\,&
\scriptstyle(1,3)\,&
\scriptstyle(1,4)\,&
\scriptstyle(2,1)\,&
\scriptstyle(2,2)
   \end{array}}&\\
\left[ 
\begin{array}{cccc|cc}
   \ 0 \ & \ 0 \ & \ 0 \ & \ 0\  &\  0\  & \  0  \ \\
  0&0&0&0&0&0\\
  0&0&0&0&0&0 \\
  \star & \star & \star & \star & \star & \star \\\hline
\star&0&0&0&0&0 \\
\star&0&0&0&\star&\star \\
  \end{array}\right]
  &\!\!\!\!\begin{matrix}
\scriptstyle(1,1)\\
\scriptstyle(1,2)\\
\scriptstyle(1,3)\\
\scriptstyle(1,4)\\
\scriptstyle(2,1)\\
\scriptstyle(2,2)
   \end{matrix}
    \end{matrix}
\end{equation}

\emph{Second step.}
We permute in $J(\lambda)^+$ the first vertical strips of
\[
J_{m_1}(\lambda I_{r_1}),\, J_{m_2}(\lambda I_{r_2}),\,
\dots,\,J_{m_t}(\lambda I_{r_t})
\]
into the first $t$ vertical strips and permute the
corresponding horizontal strips, then permute the second vertical strips into the next vertical strips and permute the
corresponding horizontal strips; continue the process until $J(\lambda)^{\#}$ is achieved. The same permutations transform ${\cal H}^{\text{\it +}}$ to
${\cal H}^{\#}$.

For example, applying there permutations to \eqref{iod} and \eqref{kidh}, we obtain
\begin{equation}\label{lih}
J(\lambda)^{\#}=\begin{matrix}
{\arraycolsep 0.40em
\begin{matrix}
\scriptstyle(1,1)&
\scriptstyle(2,1)&
\scriptstyle(1,2)&
\scriptstyle(2,2)&
\scriptstyle(1,3)&
\scriptstyle(1,4)
   \end{matrix}}&\\
\left[\begin{array}{cc|cc|c|c}
  \lambda I_p & 0&I_p&0&0&0 \\
  0& \lambda I_q &0&I_q&0&0 \\\hline
  0&0&\lambda I_p & 0&I_p&0 \\
  0&0&0&\lambda I_q &0&0 \\\hline
0&0&0&0&\lambda I_p & I_p \\
\hline
0&0&0&0&0&\lambda I_p \\
  \end{array}\right]
  &\!\!\!\!\begin{matrix}
\scriptstyle(1,1)\\[0,1mm]
\scriptstyle(2,1)\\[0,1mm]
\scriptstyle(1,2)\\[0,1mm]
\scriptstyle(2,2)\\[0,1mm]
\scriptstyle(1,3)\\[0,1mm]
\scriptstyle(1,4)   \end{matrix}
    \end{matrix}
\end{equation}
and
\begin{equation}\label{oih}
{\cal H}^{\#}=\begin{matrix}
    {\arraycolsep 0.20em
  \begin{array}{cccccc}
\scriptstyle(1,1)\,&
\scriptstyle(2,1)\,&
\scriptstyle(1,2)\,&
\scriptstyle(2,2)\,&
\scriptstyle(1,3)\,&
\scriptstyle(1,4)
   \end{array}}&\\
\left[ 
\begin{array}{cc|cc|c|c}
   \ 0 \ & \ 0 \ & \ 0 \ & \ 0\  &\  0\  & \  0  \ \\
 \star&0&0&0&0&0\\ \hline
  0&0&0&0&0&0 \\
  \star & \star & 0 & \star & 0 & 0 \\\hline
0&0&0&0&0&0 \\\hline
\star & \star & \star & \star & \star & \star \\
  \end{array}\right]
  &\!\!\!\!\begin{matrix}
\scriptstyle(1,1)\\[0,1mm]
\scriptstyle(2,1)\\[0,1mm]
\scriptstyle(1,2)\\[0,1mm]
\scriptstyle(2,2)\\[0,1mm]
\scriptstyle(1,3)\\[0,1mm]
\scriptstyle(1,4)
   \end{matrix}
    \end{matrix}
\end{equation}

{\em Proof of Theorem \ref{t2h}}.
(i)
Following \eqref{iod}, we index the vertical (horizontal) strips of $J(\lambda)^{\text{\it
+}}$ in \eqref{kids} by the pairs of natural numbers
as follows: a strip is indexed by $(i,j)$ if it contains the $j$-th strip of $J_{m_i}(\lambda I_{r_i})$.
The pairs that index the strips of $J(\lambda)^{\text{\it
+}}$ form the sequence
\begin{equation}\label{hik}
\begin{array}{l}
(1,1),\ (1,2),\ \dots,\ (1,m_t),\ \dots,\ (1,m_2),\
\dots,\ (1,m_1),\\
(2,1),\ (2,2),\ \dots,\ (2,m_t),\ \dots,\ (2,m_2),\\
\cdots\cdots\cdots\cdots\cdots
\cdots\cdots\cdots\cdots\cdots\\
(t,1),\ \, (t,2),\ \dots,\ \,(t,m_t),
\end{array}
\end{equation}
which is is ordered lexicographically.
Rearranging the pairs by the columns of \eqref{hik}:
\begin{equation}\label{hik2}
(1,1),\ (2,1),\ \dots,\ (t,1);\ \dots;\ (1,m_t),\ (2,m_t),\ \dots,\ (t,m_t);\
\dots;\ (1,m_1)
\end{equation}
(i.e., as in lexicographic ordering but starting from the second elements of the pairs)
and making the same permutation of the corresponding strips in $J(\lambda)^{\text{\it
+}}$ and ${\cal H}^{\text{\it
+}}$, we obtain $J(\lambda)^{\#}$ and ${\cal H}^{\#}$; see examples \eqref{lih} and \eqref{oih}.

The $((i,j),(i',j'))$-th entry of ${\cal H}^{\text{\it +}}$ is a star if and only if
\begin{equation}\label{kud}
\text{either $i\le i'$ and $j=m_i$,
or $i>i'$ and $j'=1$.}
\end{equation}
By \eqref{jte}, in these cases $j\ge j'$ and if $j=j'$ then either $j=j'=m_i$ and $i=i'$, or $j=j'=1$ and $i>i'$. Therefore, ${\cal H}^{\#}$ is lower block triangular.

(ii) This statement follows from (i) and Theorem \ref{teo2}(ii).
\qquad\endproof

\begin{remark}\label{yr}
Let $J(\lambda)$ be a Jordan matrix with a single eigenvalue, let $m_1> m_2>\dots> m_t$ be the distinct sizes of its Jordan blocks, and let $r_i$ be the number of Jordan blocks of size $m_i$. Then the deformation $J(\lambda )^{\#}+{\cal H}^{\#}$
from Theorem {\rm\ref{t2h}} can be formally constructed as follows:
\begin{itemize}
  \item
$J(\lambda )^{\#}$ and ${\cal H}^{\#}$ are matrices of the same size; they are conformally  partitioned into horizontal and vertical strips, which are indexed by the pairs \eqref{hik2}.

  \item
The $((i,j),(i,j))$-th
diagonal block of $J(\lambda )^{\#}$ is $\lambda I_{r_i}$, its $((i,j),(i,j+1))$-th block is $I_{r_i}$, and its other blocks are zero.

  \item
The $((i,j),(i',j'))$-th block of ${\cal H}^{\text{\it +}}$ has the form \eqref{liy} if and only if \eqref{kud} holds; its other blocks are zero.  \end{itemize}
\end{remark}

\section{Miniversal deformations of matrix pencils}
\label{s3}

By Kronecker's theorem on matrix pencils (see \cite[Sect.
XII, \S 4]{gan}), each pair of $m\times n$ matrices reduces by equivalence transformations
\[
(A,B)\mapsto
(S^{-1}AR,S^{-1}BR),\quad\text{
$S$ and $R$ are nonsingular,}
\]
to a \emph{Kronecker canonical pair} $(A_{\text{kr}},B_{\text{kr}})$ being a direct sum, uniquely determined up to permutation of summands, of pairs of the form
\[
(I_r,J_r(\lambda)),\
(J_r(0),I_r),\ (F_r,G_r),\
(F_r^T,G_r^T),
\]
in which $\lambda\in {\mathbb
C}$ and
\begin{equation}       \label{3.1o}
F_r:=\begin{bmatrix}
         1&&0\\
         0&\ddots&\\
         &\ddots&1\\
         0&&0
         \end{bmatrix},\qquad
G_r:=\begin{bmatrix}
         0&&0\\
         1&\ddots&\\
         &\ddots&0\\
         0&&1
         \end{bmatrix}
         \end{equation}
are matrices of size  $r\times (r-1)$ with $r\ge
1$.

Definitions \ref{ite} and \ref{fruo} are extended to matrix pairs in a natural way.

Miniversal deformations of $(A_{\text{kr}},B_{\text{kr}})$ were obtained in \cite{kag,gar_ser}. The deformation obtained in \cite{gar_ser} is simple; in this section we reduce it to block triangular form by permutations of rows and columns. For this purpose, we replace in $(A_{\text{kr}},B_{\text{kr}})$
\begin{itemize}
  \item
the direct sum $(I,J)$ of all pairs of the form $(I_r,J_r(\lambda))$ by the pair $(I,J^{\#})$, and
  \item
the direct sum $(J(0),I)$ of all pairs of the form $(J_r(0),I_r)$ by the pair $(J(0)^{\#},I)$,
\end{itemize}
in which $J^{\#}$ and $J(0)^{\#}$ are the Weyr matrices from Definition \ref{jyf}. We obtain a canonical matrix pair of the form
\begin{equation}       \label{3.1b}
\bigoplus_{i=1}^l(F_{p_i}^T,
G_{p_i}^T)\oplus (I,J^{\#}) \oplus
(J(0)^{\#},I)\oplus
\bigoplus_{i=1}^r(F_{q_i},
G_{q_i});
\end{equation}
in which we suppose that
\begin{equation}\label{juy}
p_1\le\dots\le p_l,\qquad
q_1\ge\dots\ge q_r.
\end{equation}
(This special ordering of direct
summands of \eqref{3.1b} admits to construct its
miniversal deformation that is block triangular.)

Denote by
\[
0^{\uparrow}:=\begin{bmatrix}
*&\cdots&*
\\
&\text{\Large 0}
\end{bmatrix},\ \
0^{\downarrow}:=\begin{bmatrix}
&\text{\Large 0}\\ *&\cdots&*
\end{bmatrix},\ \
0^{\leftarrow}:=\begin{bmatrix}
*\\\vdots&\text{\Large 0}\\ *
\end{bmatrix},\ \
0^{\rightarrow}:=\begin{bmatrix}
&*\\\text{\Large 0}&\vdots\\& *
\end{bmatrix}
\]
the
matrices, in which the entries of the first row, the last row, the first column, and the last column, respectively, are stars and the other entries are zero, and write
\[
{\cal Z}:=
\begin{bmatrix}
\begin{matrix}
*&\cdots&*
\end{matrix}&
\begin{matrix}
0&\cdots&0
\end{matrix}\\
\text{\Large 0}&
\begin{matrix}
\vdots&\ddots&\vdots\\
0&\cdots&0
\end{matrix}
\end{bmatrix}
\]
(the number of zeros in the first row of $\cal Z$ is equal to the number of rows).
The stars denote independent parameters.

In the following theorem we give a simple miniversal deformation of \eqref{3.1b} that is block triangular with respect to the partition of \eqref{3.1b} in which $J^{\#}$ and $J(0)^{\#}$ are partitioned as in Theorem \ref{t2h} and all blocks of $(F_{p_i}^T,
G_{p_i}^T)$ and $(F_{q_i},
G_{q_i})$ are $1$-by-$1$.

\begin{theorem}     \label{t3.1}
Let $(A,B)$ be a canonical matrix pair of the form \eqref{3.1b} satisfying \eqref{juy}. One of the block triangular simple miniversal deformations of $(A,B)$ has the form $({\cal A},{\cal B})$, in which
\begin{equation}\label{dnt}
{\cal A}:=\left[\begin{array}{cccc}
\begin{matrix}
F_{p_1}^T\\&F_{p_2}^T
  \\&&\ddots\\
  0&&& \multicolumn{1}{c|}{F_{p_l}^T}
\end{matrix}&&&0
          \\
          \cline{1-2}
0\vphantom{A^{A^{A}}}
&\multicolumn{1}{|c|}I&
         \\ \cline{1-3}
\begin{matrix}
0^{\rightarrow}&0^{\rightarrow}&\dots
&0^{\rightarrow}
\end{matrix} \vphantom{A^{A^{A}}} &\multicolumn{1}{|c|}0& \multicolumn{1}{c|}{J(0)^{\#} +{\cal H}^{\#}}
          \\\cline{1-3}
\begin{matrix}
0^{\rightarrow} &0^{\rightarrow}&\dots
&0^{\rightarrow}
\end{matrix}&
\multicolumn{1}{|c}{0}& \multicolumn{1}{|c|}
{\begin{matrix}
0^{\downarrow}\\ 0^{\downarrow}\\\vdots
\\0^{\downarrow}
\end{matrix}}&
\begin{matrix}\cline{1-1}
F_{q_1}\vphantom{A^{A^{A}}}\\&F_{q_2}
  \\&&\ddots\\
  0&&& F_{q_r}
\end{matrix}
\end{array}\right]
\end{equation}
and
\begin{equation}\label{dnt1}
{\cal B} :=\left[\begin{array}{cccc}
\begin{matrix}
G_{p_1}^T\\{\cal Z}^T\vphantom{A^{A^{A}}}&G_{p_2}^T
  \\\vdots&\ddots&\ddots\\
  {\cal Z}^T&\dots&{\cal Z}^T&
  \multicolumn{1}{c|}{G_{p_l}^T}
\end{matrix}&&&0\\\cline{1-2}
\begin{matrix}
0^{\leftarrow}&0^{\leftarrow} &\dots
&0^{\leftarrow}
\end{matrix}\vphantom{A^{A^{A}}}& \multicolumn{1}{|c|}{J^{\#} +{\cal K}^{\#}}&
        \\ \cline{1-3}
0\vphantom{A^{A^{A}}}
&\multicolumn{1}{|c|}0& \multicolumn{1}{c|}{I}
        \\  \cline{1-3}
\begin{matrix}
0^{\uparrow}\\ 0^{\uparrow}\\\vdots
\\0^{\uparrow}
\end{matrix}&
\multicolumn{1}{|c}
{\begin{matrix}
0^{\uparrow}\\ 0^{\uparrow}\\\vdots
\\0^{\uparrow}
\end{matrix}}& \multicolumn{1}{|c|}{0}&
\begin{matrix}
\cline{1-1}
G_{q_1}\vphantom{A^{A^{A^a}}}
\\{\cal Z}&G_{q_2}
 \\\vdots&\ddots&\ddots\\
  {\cal Z}&\dots&{\cal Z}&  G_{q_r}
\end{matrix}
\end{array}\right],
\end{equation}
where $J(0)^{\#} +{\cal H}^{\#}$ and $J^{\#} +{\cal K}^{\#}$ are the block triangular miniversal deformations \eqref{kut} and \eqref{kucj}.
\end{theorem}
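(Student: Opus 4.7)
The plan is to start from the simple miniversal deformation of the Kronecker canonical pair $(A_{\text{kr}},B_{\text{kr}})$ constructed by Garcia-Planas and Sergeichuk \cite{gar_ser} and convert it into the form \eqref{dnt}--\eqref{dnt1} by a permutation similarity $(A,B)\mapsto(P^{-1}AP,P^{-1}BP)$, which is a special case of strict equivalence with $S=R=P$. Such a transformation preserves miniversality and simplicity, and acts on entries as a bijection sending zeros to zeros and independent-parameter entries to independent-parameter entries. The permutation matrix $P$ is block diagonal: on the $(I,J)$-summand it is the permutation from Definition \ref{jyf}(i) converting $J$ to $J^{\#}$, on the $(J(0),I)$-summand the one converting $J(0)$ to $J(0)^{\#}$, and it is the identity on all $(F_{p_i}^T,G_{p_i}^T)$- and $(F_{q_i},G_{q_i})$-summands. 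Thus the pair \eqref{3.1b} itself is obtained from $(A_{\text{kr}},B_{\text{kr}})$ by $P$, and $({\cal A},{\cal B})$ must be the image of the Garcia-Planas--Sergeichuk deformation under the same $P$.

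Before applying $P$, the Garcia-Planas--Sergeichuk deformation of \eqref{3.1b} (with $J,J(0)$ in place of $J^{\#},J(0)^{\#}$) has a specific shape that can be read off from \cite{gar_ser}: under the ordering \eqref{juy}, the interaction block between two summands $(F_{p_i}^T,G_{p_i}^T)$ and $(F_{p_{i'}}^T,G_{p_{i'}}^T)$ with $i<i'$ lies strictly below the block diagonal and carries only the pattern ${\cal Z}^T$ in the ${\cal B}$-component (zero in ${\cal A}$); analogously the interaction between $(F_{q_i},G_{q_i})$-summands produces the pattern ${\cal Z}$; the interactions with the regular blocks $(I,J)$ and $(J(0),I)$ produce exactly the single-row or single-column star patterns $0^{\uparrow},0^{\downarrow},0^{\leftarrow},0^{\rightarrow}$ in the positions dictated by whether the relevant row or column of a rectangular $F$ or $G$ can carry parameters. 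Applying $P$, Theorem \ref{t2h} replaces the $(I,J)$-deformation by $(I,J^{\#}+{\cal K}^{\#})$ with ${\cal K}^{\#}$ lower block triangular, and symmetrically for $(J(0),I)$. One then checks that a star living in the first (resp.\ last) row or column of a rectangular interaction block is sent by $P$ to the first (resp.\ last) row or column of the corresponding interaction block with the Weyr form, because the Weyr permutation gathers the first columns of the Jordan blocks into the first diagonal block of the Weyr form and the last columns into the last diagonal block. After this bookkeeping, the result coincides with \eqref{dnt}--\eqref{dnt1}.

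Block triangularity of the deformation then follows by refining the partition so that in each of the $0^{\uparrow},0^{\downarrow},0^{\leftarrow},0^{\rightarrow}$ blocks the populated row/column forms its own all-star subblock and the remainder an all-zero subblock, and by combining three ingredients: the block triangularity of ${\cal H}^{\#}$ and ${\cal K}^{\#}$ from Theorem \ref{t2h}, the strict subdiagonal placement of the ${\cal Z}^T$- and ${\cal Z}$-patterns forced by the orderings \eqref{juy}, and the one-row/one-column support of the $0^{\updownarrow,\leftrightarrow}$ blocks connecting the singular and regular parts. Simplicity is immediate since each refined block is either identically zero or has all its entries as independent parameters, and minimality of the total number of parameters is inherited automatically from \cite{gar_ser} because $P$ is a bijection on entries. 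The main obstacle I foresee is precisely the combinatorial bookkeeping of the second step: tracking every star of the Garcia-Planas--Sergeichuk deformation through $P$ to the precise location prescribed by \eqref{dnt}--\eqref{dnt1} and certifying that none migrates above the block diagonal in the refined partition.
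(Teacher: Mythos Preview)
Your approach is essentially the paper's: start from the Garcia-Planas--Sergeichuk simple miniversal deformation of the Kronecker canonical pair from \cite{gar_ser}, apply a permutational equivalence that implements the Weyr rearrangement on the regular summands (invoking Theorem~\ref{t2h}), and read off \eqref{dnt}--\eqref{dnt1}. One small technical slip: since $A$ and $B$ are $m\times n$ with $m\ne n$ in general, the transformation cannot literally be a similarity $(P^{-1}AP,P^{-1}BP)$ with a single square $P$; you need a pair of permutation matrices $(S,R)$ acting on rows and columns separately (coinciding on the square regular summands, identity on the singular ones), and the paper in addition reverses the order of the direct summands to convert the upper-triangular layout of \cite{gar_ser} into the lower-triangular layout of \eqref{dnt}--\eqref{dnt1}.
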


\begin{proof} The following miniversal deformation of matrix pairs was obtained in \cite{gar_ser}. The matrix pair \eqref{3.1b} is equivalent to its Kronecker canonical form
\[
(A_{\text{kr}},B_{\text{kr}}):=\bigoplus_{i=1}^r(F_{q_i},
G_{q_i})\oplus (I,J) \oplus
(J(0),I)\oplus
\bigoplus_{i=1}^l(F_{p_i}^T,
G_{p_i}^T).
\]
By \cite[Theorem 4.1]{gar_ser}, one of the simple miniversal deformations of $(A_{\text{kr}},B_{\text{kr}})$ has the form $({\cal A}_{\text{kr}},{\cal B}_{\text{kr}})$, in which
\[
{\cal A}_{\text{kr}}:=\left[\begin{array}{ccc|c}
{\begin{matrix}
F_{q_r}&&&0\\&F_{q_{r-1}}
  \\&&\ddots\\
  0&&& {F_1}
\end{matrix}}&
\multicolumn{1}{|c|}{0}&
\begin{matrix}
0^{\downarrow}\\ 0^{\downarrow}\\\vdots
\\0^{\downarrow}
\end{matrix}&\begin{matrix}
0^{\rightarrow} &0^{\rightarrow}&\dots
&0^{\rightarrow}
\end{matrix}
          \\
          \hline
&\multicolumn{1}{|c|}I&0&0
          \\
         \cline{2-4}
&&\multicolumn{1}{|c|}{J(0)+{\cal H}}&\begin{matrix}
0^{\rightarrow}&0^{\rightarrow}&\dots
&0^{\rightarrow}
\end{matrix}
          \\
          \cline{3-4}
0&&&
\begin{matrix}
F_{p_l}^T\vphantom{A^{A^{A^a}}}
&&&0\\&F_{p_{l-1}}^T
  \\&&\ddots\\
  0&&& F_{p_1}^T
\end{matrix}
\end{array}\right]
\]
and
\[
{\cal B}_{\text{kr}}:=\left[\begin{array}{ccc|c}
{\begin{matrix}
G_{q_r}&{\cal Z}&\dots&{\cal Z}\\&G_{q_{r-1}}
&\ddots&\vdots
  \\&&\ddots&{\cal Z}\\
  0&&& {G_{q_{1}}}
\end{matrix}}&
\multicolumn{1}{|c|}{\begin{matrix}
0^{\uparrow}\\[1mm] 0^{\uparrow}\\[1mm]\vdots
\\[1mm]
0^{\uparrow}
\end{matrix}}&0&\begin{matrix}
0^{\uparrow}\\[1mm] 0^{\uparrow}\\[1mm]\vdots
\\[1mm]0^{\uparrow}
\end{matrix}
          \\
          \hline
&\multicolumn{1}{|c|}{J+{\cal K}}&0&\begin{matrix}
0^{\leftarrow}&0^{\leftarrow} &\dots
&0^{\leftarrow}
\end{matrix}
          \\
         \cline{2-4}
&&\multicolumn{1}{|c|}{I}&0
          \\
          \cline{3-4}
0&&&
\begin{matrix}
G_{p_l}^T\vphantom{A^{A^{A^a}}}
&{\cal Z}^T&\dots&{\cal Z}^T
\\&G_{p_{l-1}}^T&\ddots&\vdots
  \\&&\ddots&{\cal Z}^T\\
  0&&& G_{p_1}^T
\end{matrix}
\end{array}\right].
\]
In view of Theorem \ref{t2h}, the deformation $({\cal
A}_{\text{kr}},{\cal B}_{\text{kr}})$ is permutationally equivalent to the deformation
$({\cal
A},{\cal B})$ from Theorem \ref{t3.1}. (The blocks $\cal H$ and $\cal K$ in $({\cal
A}_{\text{kr}},{\cal B}_{\text{kr}})$ are lower block triangular; because of this  we reduce $({\cal
A}_{\text{kr}},{\cal B}_{\text{kr}})$ to $({\cal
A},{\cal B})$, which is lower block triangular.) \qquad
\end{proof}

\begin{remark}
Constructing $J(\lambda)^{\#}$, we for each $r$ join all $r$-by-$r$ Jordan blocks $J_r(\lambda)$ of $J(\lambda)$ in $J_r(\lambda I)$; see \eqref{kids}. We can join analogously pairs of equal sizes in \eqref{3.1b} and obtain a pair of the form
\begin{equation}\label{jyds}
\bigoplus_{i=1}^{l'}(\hat F_{p'_i}^T,
\hat G_{p'_i}^T)\oplus (I,J^{\#}) \oplus
(J(0)^{\#},I)\oplus
\bigoplus_{i=1}^{r'}(\hat F_{q'_i},
\hat G_{q'_i}),
\end{equation}
in which $
p'_1<\dots< p'_{l'}$ and
$q'_1>\dots> q'_{r'}.$
This pair is permutationally equivalent to \eqref{3.1b}. Producing the same permutations of rows and columns in \eqref{dnt} and \eqref{dnt1}, we join all $F_{p}^T,G_{p}^T, F_{q},
G_{q}$ in $\hat F_{p}^T,
\hat G_{p}^T,\hat F_{q},
\hat G_{q}$, and $0, 0^{\uparrow},
0^{\downarrow}, 0^{\leftarrow}, 0^{\rightarrow},{\cal Z}$ in $\hat 0, \hat 0^{\uparrow},
\hat 0^{\downarrow}, \hat 0^{\leftarrow}, \hat 0^{\rightarrow},\hat {\cal Z}$ which consist of blocks $0$ and $\star$ defined in \eqref{liy}; the
obtaining pair is a block triangular miniversal deformation of \eqref{jyds}.

\end{remark}

\section{Miniversal deformations of contragredient matrix pencils}
\label{s4}

Each pair of $m\times n$ and $n\times m$ matrices reduces by transformations of contragredient equivalence
\[
(A,B)\mapsto
(S^{-1}AR,R^{-1}BS),\quad\text{
$S$ and $R$ are nonsingular,}
\]
to the \emph{Dobrovol'skaya and
Ponomarev canonical form} \cite{pon} (see also \cite{hor+mer}) being a direct sum, uniquely determined up to permutation of summands, of pairs of the form
\begin{equation}       \label{3.1}
(I_r,J_r(\lambda)),\
(J_r(0),I_r),\ (F_r,G_r^T),\
(F_r^T,G_r),
\end{equation}
in which $\lambda\in {\mathbb
C}$ and the matrices $F_r$ and $G_r$ are defined in
\eqref{3.1o}.

For each matrix $M$, define the matrices
\[
{M}_{\vartriangle}:=
\begin{bmatrix}
0\,\dots\, 0\\M \\
          \end{bmatrix},\qquad
M_{\rhd}:=\begin{bmatrix}
M&\begin{matrix}0\\[-2mm]
\vdots\\[-1mm]
0
\end{matrix}
\end{bmatrix}
\]
that are obtained by adding the zero row to
the top and the zero column to the right, respectively.
Each block matrix whose blocks have the form
${\cal T}_{\vartriangle}$ (in which ${\cal T}$ is defined in \eqref{5.1}) is denoted by ${\cal H}_{\vartriangle}$. Each block matrix whose blocks have the form
${\cal T}_{\rhd}$ is denoted by ${\cal H}_{\rhd}$.

\begin{theorem}     \label{t22}
Let
\begin{equation}\label{kye}
(I,J)\oplus(A,B)
\end{equation}
be a canonical matrix pair for contragredient equivalence, in which $J$ is a nonsingular Jordan canonical matrix,
\[
(A,B):=\bigoplus_{i=1}^l(F_{p_i},
G_{p_i}^T)\oplus (I,J(0)) \oplus
(J'(0),I)\oplus
\bigoplus_{i=1}^r(F_{q_i}^T,
G_{q_i}),
\]
$J(0)$ and $J'(0)$ are Jordan matrices with the single eigenvalue $0$,
and
\[
p_1\ge p_2\ge\dots\ge p_l,\qquad q_1\le q_2\le\dots\le q_r.
\]
Then one of the simple miniversal deformations of \eqref{kye} has the form
\begin{equation}\label{hot}
(I,J+{\cal K})\oplus ({\cal A},{\cal B}),
\end{equation}
in which $J+{\cal K}$ is the deformation \eqref{kuc} of $J$ and $({\cal A},{\cal B})$ is the following deformation of $(A,B)$:
\[
{\cal A}:=\left[\begin{array}{cccc}
\begin{matrix}
F_{p_1}&{\cal T}&\dots&{\cal T}\\&F_{p_2}&\ddots
&\vdots\\&&\ddots&{\cal T}\\
  &&& F_{p_l}\\
\cline{4-4}
\end{matrix}&
\multicolumn{1}{|c|}{{\cal H}_{\vartriangle}}&
{\cal H}&
\multicolumn{1}{|c}{\cal H}
            \\
\cline{2-4}
&\multicolumn{1}{|c|}I
&{\cal H}&\multicolumn{1}{|c}
{{\cal H}_{\rhd}\vphantom{A^{A^{A}}}}
         \\ \cline{2-4}
&&\multicolumn{1}{|c|}
{J'(0)+{\cal H}}&{\cal H}\vphantom{A^{A^{A}}}\\ \cline{3-4}0
&&&\begin{matrix}
\multicolumn{1}{|c}
{G_{q_1}^T\vphantom{A^{A^{A^a}}}}
&{\cal T}&\dots&{\cal T}
\\&G_{q_2}^T
&\ddots
&\vdots
  \\&&\ddots&{\cal T}\\
 &&& G_{q_r}^T
\end{matrix}
\end{array}\right]
\]
and
\[
{\cal B} :=\left[\begin{array}{cccc}
\begin{matrix}
G_{p_1}^T+{\cal T}\mspace{-30mu}&&&
  \\
{\cal T}&\ddots
  \\
\vdots&\mspace{-30mu}
\ddots&\mspace{10mu}
  \ddots
  \\
 {\cal T}&\mspace{-30mu}\dots& \mspace{-50mu}{\cal T}&
 \multicolumn{1}{c|}{ \mspace{-30mu}G_{p_l}^T+{\cal T}}
 \end{matrix}&&&0\\\cline{1-2}
{\cal H}& \multicolumn{1}{|c|}{J(0) +{\cal H}}&\\ \cline{1-3}
\multicolumn{1}{c|}{{\cal H}_{\rhd}} &\multicolumn{1}{c|}{\cal H}    & \multicolumn{1}{c|}{I}\\\cline{1-3}
\multicolumn{1}{c|}{\cal H}&{\cal H}& \multicolumn{1}{|c|}{{\cal H}_{\vartriangle}}&
\begin{matrix}
\cline{1-2}
F_{q_1}+{\cal T}\mspace{-30mu}&&&
  \\
{\cal T}&\ddots
  \\
\vdots&\mspace{-30mu}
\ddots&\mspace{10mu}
  \ddots
  \\
 {\cal T}&\mspace{-30mu}\dots& \mspace{-50mu}{\cal T}& \mspace{-30mu}F_{q_r}+{\cal T}
 \end{matrix}\end{array}\right].
\]
\end{theorem}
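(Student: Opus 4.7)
The proof plan parallels that of Theorem \ref{t3.1}, with one simplification: Theorem \ref{t22} keeps the nilpotent blocks $J(0)$ and $J'(0)$ in Jordan rather than Weyr form, so no appeal to Theorem \ref{t2h} is needed here. I would begin by splitting off the nonsingular Jordan summand $(I,J)$: under contragredient equivalence $(I,J)\mapsto(S^{-1}R,\,R^{-1}JS)$, setting $R=S$ reduces this factor to ordinary similarity $J\mapsto S^{-1}JS$, so a miniversal deformation of this summand alone is $(I,\,J+{\cal K})$ by Theorem \ref{teo2}(ii). This yields the first direct summand of \eqref{hot}.

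For the remaining summand $(A,B)$, I would invoke the simple miniversal deformation of the Dobrovol'skaya--Ponomarev canonical form obtained in \cite{gar_ser}. That reference supplies an explicit pair $(\widetilde{\cal A},\widetilde{\cal B})$ in which the four types of direct summands in \eqref{3.1} are arranged in some fixed ordering, with off-diagonal blocks filled by star patterns of types ${\cal T}$, ${\cal Z}$, ${\cal H}$, ${\cal H}_{\vartriangle}$, ${\cal H}_{\rhd}$: the patterns ${\cal T}$ and ${\cal Z}$ control interactions among equally typed singular summands; the corner patterns ${\cal H}_{\vartriangle}$ and ${\cal H}_{\rhd}$ control crossings of singular with nilpotent summands (reflecting the shapes of $F_r$ and $G_r$ defined in \eqref{3.1o}); and generic ${\cal H}$ blocks fill the remaining cross-interactions, including the crossings between the two nilpotent summands $(I,J(0))$ and $(J'(0),I)$. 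I would then reorder the direct summands so as to achieve $p_1\ge\cdots\ge p_l$ and $q_1\le\cdots\le q_r$ as required, applying the same row-and-column permutations to $(\widetilde{\cal A},\widetilde{\cal B})$. Since a simultaneous permutation of rows and columns of a simple miniversal deformation remains simple and miniversal (now with respect to the correspondingly permuted canonical pair), the resulting $({\cal A},{\cal B})$ is a simple miniversal deformation of $(A,B)$.

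The main obstacle is the bookkeeping needed to confirm that after these permutations the off-diagonal star patterns in each block sit at exactly the positions prescribed in the stated formulas for ${\cal A}$ and ${\cal B}$. One must check that the chosen ordering places the ${\cal T}$-type blocks strictly above the diagonal in the $F_{p_i}$ column of ${\cal A}$ and strictly below the diagonal in the $G_{p_i}^T$ column of ${\cal B}$ (and analogously for the $q$-indexed parts), and that each singular--nilpotent crossing receives ${\cal H}_{\vartriangle}$ or ${\cal H}_{\rhd}$ according to whether the adjacent edge of $F_r$ or $G_r$ is the one supporting the extra parameters. Once all the star patterns are matched block by block, combining the two halves $(I,\,J+{\cal K})$ and $({\cal A},{\cal B})$ yields the claimed simple miniversal deformation \eqref{hot}.
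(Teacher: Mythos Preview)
Your plan has a genuine gap: unlike Theorem~\ref{t3.1}, this result is \emph{not} obtained from the Garcia-Planas--Sergeichuk deformation by a mere permutation of strips. The deformation $(\widetilde{\cal A},\widetilde{\cal B})$ supplied by \cite[Theorem 5.1]{gar_ser} does \emph{not} already contain the patterns ${\cal H}_{\vartriangle}$ and ${\cal H}_{\rhd}$, and several of its star blocks sit in the ``wrong'' member of the pair. For instance, in \cite{gar_ser} the diagonal singular blocks carry their ${\cal T}$-parameters on the $F_{p_i}$ side, i.e.\ $F_{p_i}+{\cal T}$ appears in $\widetilde{\cal A}$ while the corresponding diagonal entry of $\widetilde{\cal B}$ is just $G_{p_i}^T$; in the target $({\cal A},{\cal B})$ this is reversed. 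Similarly, the cross-blocks between $(F_p,G_p^T)$ and $(I,J_m(0))$, between $(I,J_m(0))$ and $(J_n(0),I)$, etc., carry their stars in $\widetilde{\cal B}$, whereas in the stated $({\cal A},{\cal B})$ they have been transferred to ${\cal A}$ (acquiring the shapes ${\cal H}_{\vartriangle}$ or ${\cal H}_{\rhd}$ in the process). No row/column permutation can move a parameter from one matrix of a contragredient pair to the other, so your ``bookkeeping'' step cannot succeed as stated.

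What the paper actually does is invoke the versality criterion of \cite[Theorem 2.1]{gar_ser}: $(C+{\cal P},D+{\cal Q})$ is versal iff every $(M,N)$ can be written as $(P,Q)-(CR-SC,\,DS-RD)$ with $(P,Q)$ matching the star pattern. Since versality is checked block by block, one starts from the known versal pattern $({\cal P}',{\cal Q}')$ and, for each block where $({\cal P}_{ij},{\cal Q}_{ij})\ne({\cal P}'_{ij},{\cal Q}'_{ij})$, exhibits explicit $S,R$ that carry any $(P'_{ij},Q'_{ij})$ into the new pattern via $(P'_{ij},Q'_{ij})+(C_iR-SC_j,\,D_iS-RD_j)=(P_{ij},Q_{ij})$. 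Five such cases occur; miniversality then follows because the parameter count is unchanged. Your proposal is missing exactly this relocation step. (A smaller issue: your first paragraph argues only about transformations internal to the summand $(I,J)$ and does not address why no cross-parameters arise between $(I,J)$ and $(A,B)$; this too is part of what \cite[Theorem 5.1]{gar_ser} supplies.)
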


\begin{proof}
The following simple miniversal deformation of \eqref{kye} was obtained in \cite[Theorem 5.1]{gar_ser}: up to obvious permutations of strips, it has the form
\begin{equation}\label{kiv}
(I,J+{\cal K})\oplus ({\cal A}',{\cal B}'),
\end{equation}
in which $J+{\cal K}$ is \eqref{kuc},
\[
{\cal A}':=\left[\begin{array}{c|c|c|c}
\begin{matrix}
F_{p_1}+{\cal T}\mspace{0mu}&{\cal T}&\dots&{\cal T}\\
&\mspace{-60mu}F_{p_2}+{\cal T}\mspace{-30mu}
&\ddots
&\vdots\\&&\ddots&{\cal T}\\
  \mspace{-30mu}0&&& \mspace{-30mu}F_{p_l}+{\cal T}\\
\end{matrix}&
0&
{\cal H}&
{\cal H}
            \\
\hline
0&I
&0&
0
         \\ \hline
{\cal H}&0&
J'(0)+{\cal H}&{\cal H}\\ \hline 0
&0&{\cal H}&\begin{matrix}
{G_{q_1}^T}\vphantom{A^{A^{A^a}}}
&{\cal T}&\dots&{\cal T}
\\&G_{q_2}^T
&\ddots
&\vdots
  \\&&\ddots&{\cal T}\\
 0&&& G_{q_r}^T
\end{matrix}
\end{array}\right],
\]
and
\[
{\cal B}' :=\left[\begin{array}{c|c|c|c}
\begin{matrix}
G_{p_1}^T&&&0
  \\
{\cal T}&G_{p_2}^T
  \\
\vdots&
\ddots&
  \ddots
  \\
 {\cal T}&\dots& {\cal T}&
 { G_{p_l}^T}
 \end{matrix}&{\cal H}&0&0\\\hline
{\cal H}& {J(0) +{\cal H}}&{\cal H}&{\cal H}\\ \hline
0 &{\cal H} & {I}&0\\\hline
{\cal H}&{\cal H}& 0&
\begin{matrix}
F_{q_1}+{\cal T}\mspace{-30mu}&&&0
  \\
{\cal T}&F_{q_2}+{\cal T}\mspace{-30mu}
  \\
\vdots&\mspace{-30mu}
\ddots&\mspace{10mu}
  \ddots
  \\
 {\cal T}&\mspace{-30mu}\dots& \mspace{-50mu}{\cal T}& \mspace{-30mu}F_{q_r}+{\cal T}
 \end{matrix}\end{array}\right];
\]

Let $(C,D)$ be the canonical pair \eqref{kye}, and let $({\cal P}, {\cal Q})$ be any matrix pair of the same size in which each entry is $0$ or $*$. By \cite[Theorem 2.1]{gar_ser},
see also the beginning of the proof of Theorem 5.1 in \cite{gar_ser}, $(C+{\cal P}, D+{\cal Q})$ is a versal (respectively, miniversal) deformation of $(C,D)$ if and only if for every pair $(M,N)$ of size of $(C,D)$  there
exist square matrices $S$ and $R$ and a pair (respectively, a unique pair) $(P, Q)$ obtained from $({\cal P},{\cal Q})$  by replacing its stars with complex numbers such that
\begin{equation}       \label{3.2}
(M,N)+(CR-SC,\,DS-RD)=(P, Q).
\end{equation}

The matrices of $(C,D)$ are block diagonal:
\[
C=C_1\oplus C_2\oplus \dots\oplus C_t,\qquad
D=D_1\oplus D_2 \oplus\dots\oplus D_t,
\]
in which $(C_i,\,D_i)$
are of the form
\eqref{3.1}. Partitioning conformally the matrices of $(M,N)$ and $({\cal P},{\cal Q})$ and equating the corresponding blocks in
\eqref{3.2}, we find that $(C+{\cal P}, D+{\cal Q})$ is a versal deformation of $(C,D)$ if and only if
\begin{equation}
\label{theo4ii}
\parbox{27em}
 {for each pair of indices $(i,j)$ and every pair $(M_{ij},N_{ij})$ of the size of $({\cal P}_{ij},{\cal Q}_{ij})$ there
exist matrices $S_{ij}$ and $R_{ij}$ and a pair $( P_{ij}, Q_{ij})$ obtained from $({\cal P}_{ij},{\cal Q}_{ij})$  by replacing its stars with complex numbers such that
\[
(M_{ij},N_{ij})+(C_iR_{ij}-S_{ij}C_j,\,
D_iS_{ij}-R_{ij}D_j)=( P_{ij},Q_{ij}).\]}
\end{equation}

Let $(C+{\cal P}', D+{\cal Q}')$ be the deformation \eqref{kiv} of $(C, D)$. Since it is versal,
\begin{equation}
\label{theog}
\parbox{27em}
 {for each pair of indices $(i,j)$ and every pair $(M_{ij},N_{ij})$ of the size of $({\cal P}'_{ij},{\cal Q}'_{ij})$ there
exist matrices $S_{ij}$ and $R_{ij}$ and a pair $(P'_{ij}, Q'_{ij})$ obtained from $({\cal P}'_{ij},{\cal Q}'_{ij})$  by replacing its stars with complex numbers such that
\[
(M_{ij},N_{ij})+ (C_iR_{ij}-S_{ij}C_j,\,
D_iS_{ij}-R_{ij}D_j)=(P'_{ij}, Q'_{ij}).\]}
\end{equation}

Let $(C+{\cal P}, D+{\cal Q})$ be the deformation \eqref{hot}. In order to prove that it is versal, let us verify the condition \eqref{theo4ii}.
If $({\cal P}_{ij},{\cal Q}_{ij})= ({\cal P}_{ij}',{\cal Q}_{ij}')$ then \eqref{theo4ii} holds by \eqref{theog}.

Let $({\cal P}_{ij},{\cal Q}_{ij})\ne ({\cal P}_{ij}',{\cal Q}_{ij}')$ for some $(i,j)$.
Since the condition \eqref{theog} holds, it suffices to verify that for each $(P_{ij}', Q_{ij}')$  obtained from $({\cal P}'_{ij},{\cal Q}'_{ij})$  by replacing its stars with complex numbers there exist matrices $S$ and $R$ and a pair $(P_{ij}, Q_{ij})$  obtained from $({\cal P}_{ij},{\cal Q}_{ij})$  by replacing its stars with complex numbers such that
\begin{equation}\label{b05}
(P_{ij}', Q_{ij}')+(C_iR-SC_j,\,
D_iS-RD_j)=( P_{ij}, Q_{ij}).
\end{equation}
The following 5 cases are possible.

\begin{description}

\item[\it Case 1: $(C_i,D_i)=(F_p,G^T_p)$ and $i=j$.]
${}$
Then
\[
(P_{ii}', Q_{ii}')=(T,\,0)=\left(
\begin{bmatrix}
&\text{\Large 0}\\ \alpha _1&\cdots&\alpha _{p-1}
\end{bmatrix},\, 0
\right)
\]
(we denote by $T$ any matrix obtained from ${\cal T}$ by replacing its stars with complex numbers).
Taking
\[
S:=\begin{bmatrix}
0&&&&0\\ \alpha _{p-1}&\ddots\\
\ddots&\ddots&\ddots\\
\alpha _2&\ddots&\ddots&\ddots\\
\alpha _1&\alpha _2&\ddots&\alpha _{p-1}&0
\end{bmatrix},\qquad
R:=\begin{bmatrix}
0&&&&0\\ \alpha _{p-1}&\ddots\\
\ddots&\ddots&\ddots\\
\alpha _3&\ddots&\ddots&\ddots\\
\alpha _2&\alpha _3&\ddots&\alpha _{p-1}&0
\end{bmatrix}
\]
in \eqref{b05}, we obtain
\[
(P_{ii},Q_{ii})=\left(0,
\begin{bmatrix}
\alpha _{p-1}\\
\vdots&\text{\Large 0}\\ \alpha _1
\end{bmatrix}
\right)=(0,T).
\]

\item[\it Case 2: $(C_i,D_i)=(F_p,G^T_p)$ and $(C_j,D_j)=(I_m,J_m(0))$.]
${}$
Then
$
(P_{ij}', Q_{ij}')=(0,T).
$
Taking $S:=-T_{\vartriangle}$ and $R:=0$ in \eqref{b05}, we obtain
$
(P_{ij}, Q_{ij})=(T_{\vartriangle},0).
$

\item[\it Case 3: $(C_i,D_i)=(I_m,J_m(0))$ and $(C_j,D_j)=(J_n(0),I_n)$.]
Then
$
(P_{ij}', Q_{ij}')=(0,T).
$
Taking $S:=0$ and $R:=T$ in \eqref{b05}, we obtain
$
(P_{ij}, Q_{ij})=(T,0).
$

\item[\it Case 4: $(C_i,D_i)=(I_m,J_m(0))$ and $(G_j,D_j)=(G_q^T,F_q)$.]
${}$
Then
$
(P_{ij}', Q_{ij}')=(0,T).
$
Taking $S:=0$ and $R:=T_{\rhd}$ in \eqref{b05}, we obtain
$
(P_{ij}, Q_{ij})=(T_{\rhd},0).
$

\item[\it Case 5: $(C_i,D_i)=(J_n(0),I_n)$ and $(G_j,D_j)=(F_p,G_p^T,)$.]
${}$
Then
$
(P_{ij}', Q_{ij}')=(T,0).
$
Taking $S:=T_{\rhd}$ and $R:=0$ in \eqref{b05}, we obtain
$
(P_{ij}, Q_{ij})=(0,T_{\rhd}).
$
\end{description}

We have proved that the deformation \eqref{hot} is versal. It is miniversal since it has the same number of parameters as the miniversal deformation \eqref{kiv}.\qquad
\end{proof}

\begin{remark}
The deformation $(I,J+{\cal K})\oplus ({\cal A},{\cal B})$ from Theorem {\rm\ref{t22}} can be made block triangular by the following permutations of its rows and columns, which are transformations of contragredient equivalence:
\begin{itemize}
  \item
First, we reduce $(I,J+{\cal K})$ to the form $(I,J^{\#} +{\cal K}^{\#})$, in which $J^{\#} +{\cal K}^{\#}$ is defined in \eqref{kucj}.

  \item
Second, we reduce the diagonal block $J(0)+{\cal H}$ in $\cal B$ to the form
$J(0)^{\#}+{\cal H}^{\#}$
$($defined in \eqref{kut}$)$ by the permutations of rows and columns of $\cal B$ described in Definition {\rm\ref{jyf}}. Then we make the contragredient permutations of rows and columns of $\cal A$.

  \item
Finally, we reduce the diagonal block $J'(0)+{\cal H}$ in $\cal A$ to the form
$J'(0)^{\#}+{\cal H}^{\#}$
$($defined in \eqref{kut}$)$ by the permutations of rows and columns of $\cal A$ described in Definition {\rm\ref{jyf}}, and make the contragredient permutations of rows and columns of $\cal B$. The obtained deformation $J'(0)^{\#}+{\cal H}^{\#}$ is lower block triangular, we make it upper block triangular by transformations
\[
P(J'(0)^{\#}+{\cal H}^{\#})P,\qquad P:=\begin{bmatrix}0&&1\\
&\ddd\\1&&0\end{bmatrix}
\]
$($i.e., we rearrange in the inverse order the rows and columns of $\cal A$ that cross $J'(0)^{\#}+{\cal H}^{\#}$ and make the contragredient permutations of rows and columns of $\cal B$$)$.
\end{itemize}
\end{remark}

\end{document}